\documentclass[a4paper]{amsart}
\usepackage{amsthm,amsfonts,amsmath,amssymb}
\usepackage[abs]{overpic}
\usepackage{thm-restate}

\newtheorem{theorem}{Theorem}[section]
\newtheorem{proposition}[theorem]{Proposition}

\newtheorem{example}[theorem]{Example}

\newtheorem{remark}[theorem]{Remark}
\newtheorem{problem}[theorem]{Problem}

\begin{document}
\title[MONTESINOS KNOTS WITH DELTA-UNKNOTTING NUMBER ONE]{MONTESINOS KNOTS WITH DELTA-UNKNOTTING NUMBER ONE}
\author{Kazumichi Nakamura}
\email{mi-ka-07130703@ezweb.ne.jp}





\begin{abstract}
The $\Delta$-unknotting number for a knot is defined as the minimum number of $\Delta$-moves needed to deform the knot into the trivial knot.
In this paper, we discuss Montesinos knots whose $\Delta$-unknotting number is equal to one.
We propose a conjectural characterization of Montesinos knots with $\Delta$-unknotting number one and provide examples admitting distinct $\Delta$-moves, each of which deforms the knot into the trivial knot.
\end{abstract}

\maketitle

\noindent\textbf{keywords:}$\Delta$-move, $\Delta$-unknotting number, unknotting number, Conway's normal form, two-bridge knot, Montesinos knot.

\section{Introduction}
In this paper, we study Montesinos knots with $\Delta$-unknotting number one.

In \cite{MaN}, H. Murakami and Y. Nakanishi introduced a local move on regular diagrams of oriented knots and links, called a $\Delta$-move (or $\Delta$-unknotting operation), as illustrated in Figure \ref{fig:delta}.
\begin{figure}[htbp]
 \centering    \includegraphics[width=0.8\linewidth]
    {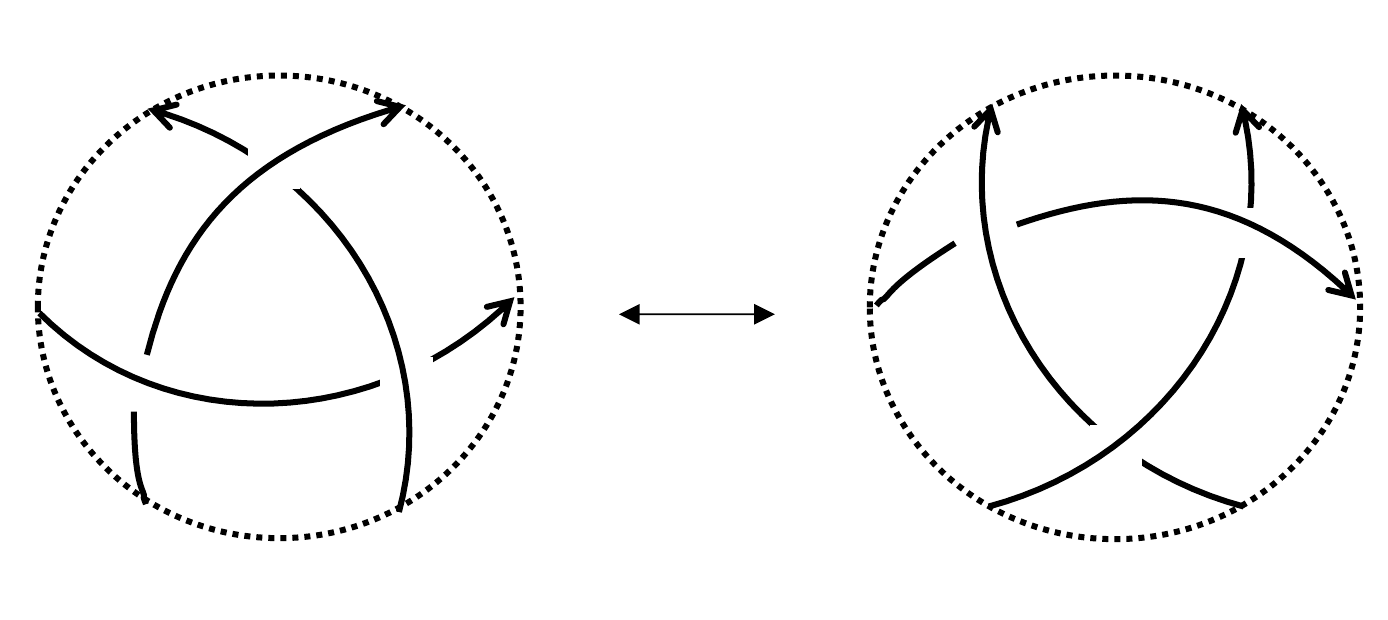}
 \caption{A $\Delta$-move.}
 \label{fig:delta}
\end{figure}
They proved that two knots can be deformed into each other by a finite sequence of $\Delta$-moves. 
The $\Delta$-Gordian distance $d_G^{\Delta}(K, K^{'})$ of two oriented knots $K$ and $K^{'}$ is defined as the minimum number of $\Delta$-moves needed to deform a diagram of $K$ into that of $K{'}$. 
The $\Delta$-unknotting number $u^{\Delta}(K)$  of an oriented knot $K$ is defined as the $\Delta$-Gordian distance $d_G^{\Delta}(K, O)$ of $K$ and the trivial knot $O$.

In \cite{Naka3}, we propose the conjecture and state the theorem concerning two-bridge knots with $u^{\Delta}(K) = 1$.
In \cite{Naka5}, we discuss ${\Delta}$-unknotting numbers for Montesinos knots.

In this paper, we propose the following conjecture and state the following theorem concerning Montesinos knots with $u^{\Delta}(K) = 1$.
The definition of a Type I rational tangle will be given in Section \ref{sec3}.

\begin{restatable}{conjecture}{conjmone} \label{thm:conjmone}
Let $K = M\big(0; (\alpha_1, \beta_1), \dots, (\alpha_r, \beta_r)\big)$
be a nontrivial Montesinos knot with $r \geq 3$. Then, the following two conditions are equivalent.
\begin{enumerate}
    \item       
    $u^{\Delta}(K) = 1$.
    \item       
    $K$ or its mirror image $K^*$ can be expressed as
    $M(0;(\alpha_1, \beta_1), (\alpha_2, \beta_2), (\alpha_3, \beta_3))$,
    where $T(\beta_3 / \alpha_3)$ is of Type I, 
    $|\alpha_1 \beta_2 + \beta_1 \alpha_2| = 1$, and
    $\alpha_1, \alpha_2, \alpha_3, \beta_1, \beta_2, \beta_3$ are non zero integers.
\end{enumerate}
\end{restatable}

Theorem \ref{thm:mone} establishes the implication (2) $\Rightarrow$ (1) in Conjecture \ref{thm:conjmone}.

\begin{restatable}{theorem}{mone} \label{thm:mone}
Let $K = M(0;(\alpha_1, \beta_1), (\alpha_2, \beta_2), (\alpha_3, \beta_3))$ be a nontrivial Montesinos knot,
where $T(\beta_3 / \alpha_3)$ is of Type I, 
$|\alpha_1 \beta_2 + \beta_1 \alpha_2| = 1$, and
$\alpha_1, \alpha_2, \alpha_3, \beta_1, \beta_2, \beta_3$ are non zero integers.
Then, we have $u^{\Delta}(K) = 1$.
\end{restatable}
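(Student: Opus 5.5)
The plan is to realise the single $\Delta$-move inside the third rational tangle $T(\beta_3/\alpha_3)$, and to use the arithmetic condition $|\alpha_1\beta_2+\beta_1\alpha_2|=1$ to show that the resulting knot is trivial. Since $K$ is nontrivial, $u^{\Delta}(K)\ge 1$. It therefore suffices to exhibit one $\Delta$-move that turns a diagram of $K$ into a diagram of the trivial knot $O$.

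\textbf{Step 1.} I write $K$ as the numerator closure
\[
K = N\bigl(T(\beta_1/\alpha_1)+T(\beta_2/\alpha_2)+T(\beta_3/\alpha_3)\bigr)
\]
of the sum of three rational tangles. The defining property of a Type I rational tangle (Section \ref{sec3}) should be that one $\Delta$-move, performed inside the tangle ball, changes $T(\beta_3/\alpha_3)$ into a tangle equivalent rel boundary to the $0$-tangle. If the definition instead yields an integral tangle $T(n)$, or a tangle that merges with a neighbouring summand, I adjust the argument below by absorbing that piece into $T(\beta_2/\alpha_2)$. I would need to check that this replacement is compatible with the string orientations induced from $K$, because a $\Delta$-move is a move on oriented diagrams. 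Presumably this compatibility is built into the definition of Type I, and it is the point I expect to be the main obstacle. If necessary, I would verify it using the explicit Conway normal form of Type I tangles and the figure of the $\Delta$-move, by drawing the three strands of the move inside the continued-fraction twist regions.

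\textbf{Step 2.} After this $\Delta$-move, $K$ becomes
\[
K' = N\bigl(T(\beta_1/\alpha_1)+T(\beta_2/\alpha_2)+T(0)\bigr) = N\bigl(T(\beta_1/\alpha_1)+T(\beta_2/\alpha_2)\bigr).
\]
By the standard fact on sums of two rational tangles, $K'$ is the two-bridge link $S(\alpha_1\beta_2+\beta_1\alpha_2,\ \ast)$. Its determinant is $|\alpha_1\beta_2+\beta_1\alpha_2|=1$. A two-bridge link $S(p,q)$ with $|p|=1$ is the trivial knot. Hence $K'=O$, which gives $u^{\Delta}(K)\le 1$.

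\textbf{Step 3.} Combining the two inequalities gives $u^{\Delta}(K)=1$. Passing to the mirror image $K^*$ is harmless, since $u^{\Delta}(K)=u^{\Delta}(K^*)$.

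If instead Type I means that the $\Delta$-move sends $T(\beta_3/\alpha_3)$ to $T(n)$, then Step 2 produces $N\bigl(T(\beta_1/\alpha_1)+T((\beta_2+n\alpha_2)/\alpha_2)\bigr)$. I would then check that the determinant condition still holds for the appropriate normalisation of the $\beta_i$, which is presumably why the hypothesis is stated for the given representation.
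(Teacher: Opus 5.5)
Your argument is the paper's proof: one $\Delta$-move inside the Type I tangle $T(\beta_3/\alpha_3)$ turns it into the horizontal tangle (the identity for the Montesinos sum, your $T(0)$), leaving the two-bridge knot $M(0;(\alpha_1,\beta_1),(\alpha_2,\beta_2))$, which is trivial exactly because $|\alpha_1\beta_2+\beta_1\alpha_2|=1$. In the paper, the fact that a Type I tangle (defined by the continued fraction $(b_0,b_m,\dots,b_1,1,1,1,1,1,1-b_1,-b_2,\dots,-b_m)$) becomes the horizontal tangle after a single $\Delta$-move is stated right after the definition and simply invoked in the proof, so your main branch is the one that applies and neither the $T(n)$ variant nor a separate orientation check is needed.
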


Regarding the ordinary unknotting number for  Montesinos knots, we recall the following proposition from \cite{Tor}.

Let
$K$ be a  Montesinos knot with $r \geq 3$.
Then $u(K) = 1$ and the unknotting operation is realized in a standard diagram if and only if the following condition holds
(*) $K = M(0;(a, -c), (b, d), (2mn \pm 1, 2n^2))$, where 
$a, b, c, d, m, n$ are some non zero integers,
$m$ and $n$ are coprime, and $ad - cb = 1$.

\vspace{1em}
Theorem \ref{thm:moneone} follows from Theorem \ref{thm:mone}.

\begin{restatable}{theorem}{moneone} \label{thm:moneone}
Let $K = M(0;(\alpha_1, \beta_1), (\alpha_2, \beta_2), (\alpha_3, \beta_3))$ be a nontrivial Montesinos knot,
where $T(\beta_2 / \alpha_2)$ and $T(\beta_3 / \alpha_3)$ are of Type I, 
$|\alpha_1 \beta_2 + \beta_1 \alpha_2| = |\alpha_1 \beta_3 + \beta_1 \alpha_3| = 1$, and
$\alpha_1, \alpha_2, \alpha_3, \beta_1, \beta_2, \beta_3$ are non zero integers.
Then, we have $u^{\Delta}(K) = 1$.

In particular, performing a $\Delta$-move in one of $T(\beta_2 / \alpha_2)$ or $T(\beta_3 / \alpha_3)$ yields the trivial knot.
\end{restatable}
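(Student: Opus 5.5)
The plan is to deduce Theorem \ref{thm:moneone} from Theorem \ref{thm:mone}, using the fact that the tangles in a Montesinos knot may be permuted cyclically and reversed without changing the knot type.

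\emph{First claim.} We apply Theorem \ref{thm:mone} directly to the given presentation $K = M(0;(\alpha_1,\beta_1),(\alpha_2,\beta_2),(\alpha_3,\beta_3))$. The hypotheses of Theorem \ref{thm:mone} are satisfied:
\begin{itemize}
\item $T(\beta_3/\alpha_3)$ is of Type I;
\item $|\alpha_1\beta_2+\beta_1\alpha_2|=1$;
\item all entries are nonzero;
\item $K$ is nontrivial.
\end{itemize}
Hence $u^{\Delta}(K)=1$ follows immediately.

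\emph{Second claim: locating the $\Delta$-move.} The ``in particular'' statement is not part of the statement of Theorem \ref{thm:mone}, but it should come from its proof. I expect the proof in Section \ref{sec3} to perform the $\Delta$-move inside the Type I tangle $T(\beta_3/\alpha_3)$. The Type I condition presumably means that one $\Delta$-move turns this tangle into a simple tangle, such as an integer tangle $T(1/0)$-type or a $\pm1$ crossing. After that move the diagram becomes a two-tangle sum involving $T(\beta_1/\alpha_1)$ and $T(\beta_2/\alpha_2)$. The condition $|\alpha_1\beta_2+\beta_1\alpha_2|=1$ is exactly the condition that the two-bridge knot $M(0;(\alpha_1,\beta_1),(\alpha_2,\beta_2))$ has determinant $1$, i.e.\ is trivial. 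So the $\Delta$-move in $T(\beta_3/\alpha_3)$ yields the trivial knot.

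Applying the same reasoning to the other condition gives the symmetric statement. I reorder the tangles cyclically as
\[
K = M(0;(\alpha_3,\beta_3),(\alpha_1,\beta_1),(\alpha_2,\beta_2)),
\]
or reverse the order, so that $(\alpha_2,\beta_2)$ sits in the third slot and $(\alpha_1,\beta_1),(\alpha_3,\beta_3)$ occupy the first two. The relevant condition is then $|\alpha_1\beta_3+\beta_1\alpha_3|=1$, which is symmetric in the two slots, and $T(\beta_2/\alpha_2)$ is of Type I. Theorem \ref{thm:mone} and the same analysis of its proof then give a $\Delta$-move inside $T(\beta_2/\alpha_2)$ that unknots $K$. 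Cyclic permutation of rational tangles preserves the Montesinos knot, and reversal only changes it by an orientation-preserving homeomorphism, possibly up to orientation of the knot. Either way, the $\Delta$-unknotting property is unaffected.

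\emph{Main obstacle.} The main obstacle is justifying that the proof of Theorem \ref{thm:mone} really localizes the $\Delta$-move inside the Type I tangle. If that proof instead uses a move straddling tangles, I would redo it directly. Here is the direct argument:
\begin{enumerate}
\item Exhibit the defining $\Delta$-move of a Type I tangle, which changes $T(\beta_3/\alpha_3)$ to a tangle $T'$ such that $T(\beta_1/\alpha_1)+T(\beta_2/\alpha_2)+T'$ is a two-bridge sum.
\item Check via the continued-fraction determinant $|\alpha_1\beta_2+\beta_1\alpha_2|=1$ that the result is the unknot.
\end{enumerate}
One also needs care with orientations, since the $\Delta$-move is defined for oriented diagrams. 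However, a $\Delta$-move is valid for either choice of orientation of the three strands, as it is related to its reverse by an isotopy, so relabeling tangle positions causes no issue.
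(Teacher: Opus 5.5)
Your proposal is correct and takes the same approach as the paper. The paper applies the Type I $\Delta$-move directly inside $T(\beta_2/\alpha_2)$ (resp.\ $T(\beta_3/\alpha_3)$) and notes that the remaining two-bridge knot $M(0;(\alpha_1,\beta_1),(\alpha_3,\beta_3))$ (resp.\ $M(0;(\alpha_1,\beta_1),(\alpha_2,\beta_2))$) is trivial, so your cyclic reordering is harmless but unnecessary. One correction to your guess: the Type I move turns the tangle into the horizontal (identity) tangle, not a $\pm1$ crossing, and this is why the condition $|\alpha_1\beta_2+\beta_1\alpha_2|=1$, with no extra twist term, is the right unknotting criterion.
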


The following problem was posed by Y. Uchida. 

\begin{problem}\label{problemdelta}
Let $K$ be a knot with $u^{\Delta}(K)=1$.
Suppose that $\Delta^{'}$ and $\Delta^{''}$ are $\Delta$-moves that deform $K$ into the trivial knot.
Can $\Delta^{'}$ be transformed into $\Delta^{''}$ by a sequence of moves shown in Figure~\ref{fig:deltalr}?
Note that the ${\Delta}$-move on ${\Delta}_l$ is equivalent to the ${\Delta}$-move on ${\Delta}_r$.
\end{problem}
\begin{figure}
    \centering
    \includegraphics[width=0.6\linewidth]{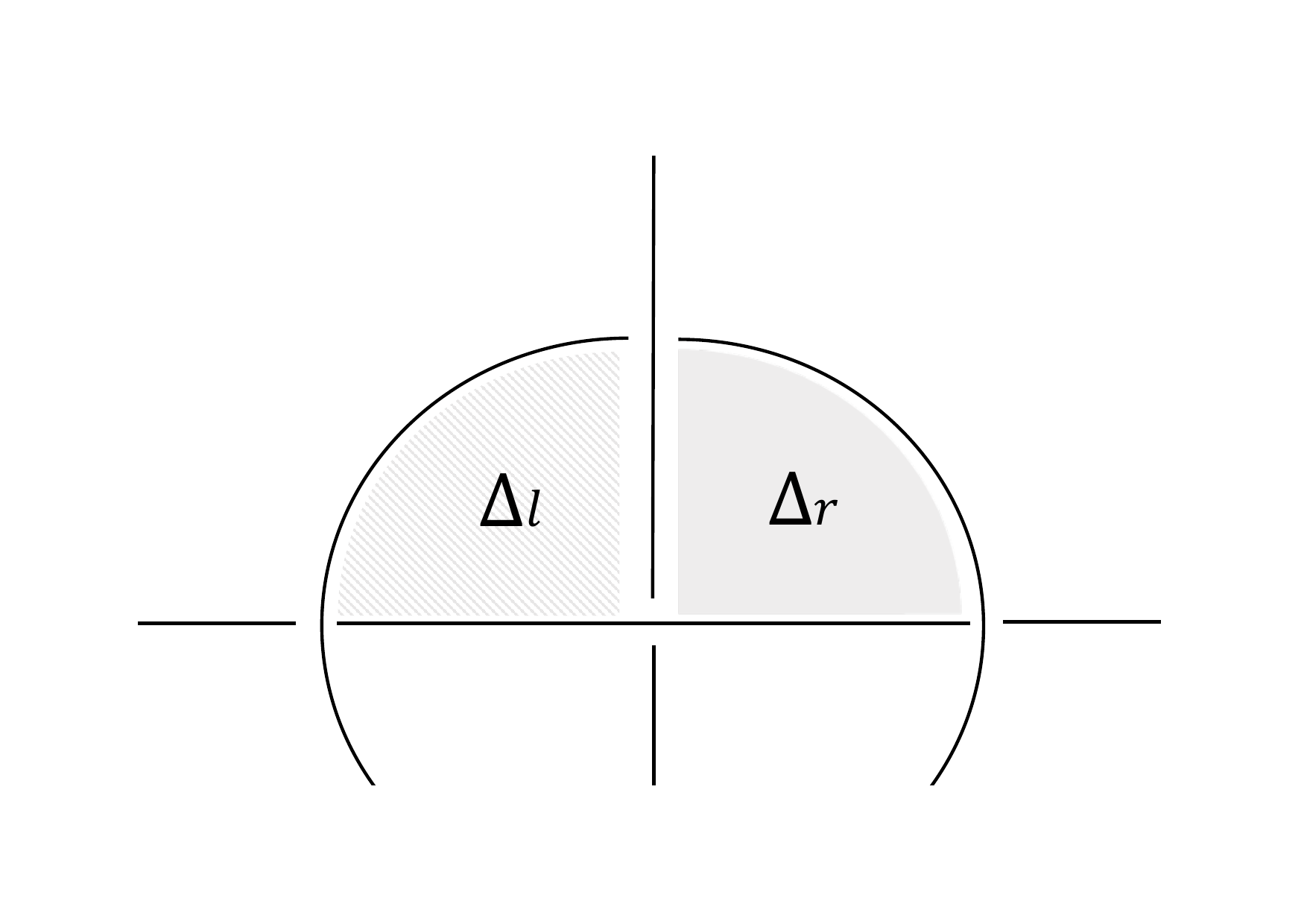}
    \caption{${\Delta}_l$ and ${\Delta}_r$}
    \label{fig:deltalr}
\end{figure}
Montesinos knots $9_{28}$ and $10_{60}$ (see Figure \ref{fig:1060}) are known counterexamples to Problem \ref{problemdelta}.
Theorem~\ref{thm:moneone} provides a method for constructing infinitely many further counterexamples.
In Section~\ref{sec3}, we give explicit infinite families of counterexamples obtained from Theorem~\ref{thm:moneone}.
\begin{figure}[htbp]
    \centering
    \includegraphics[width=0.8\linewidth]{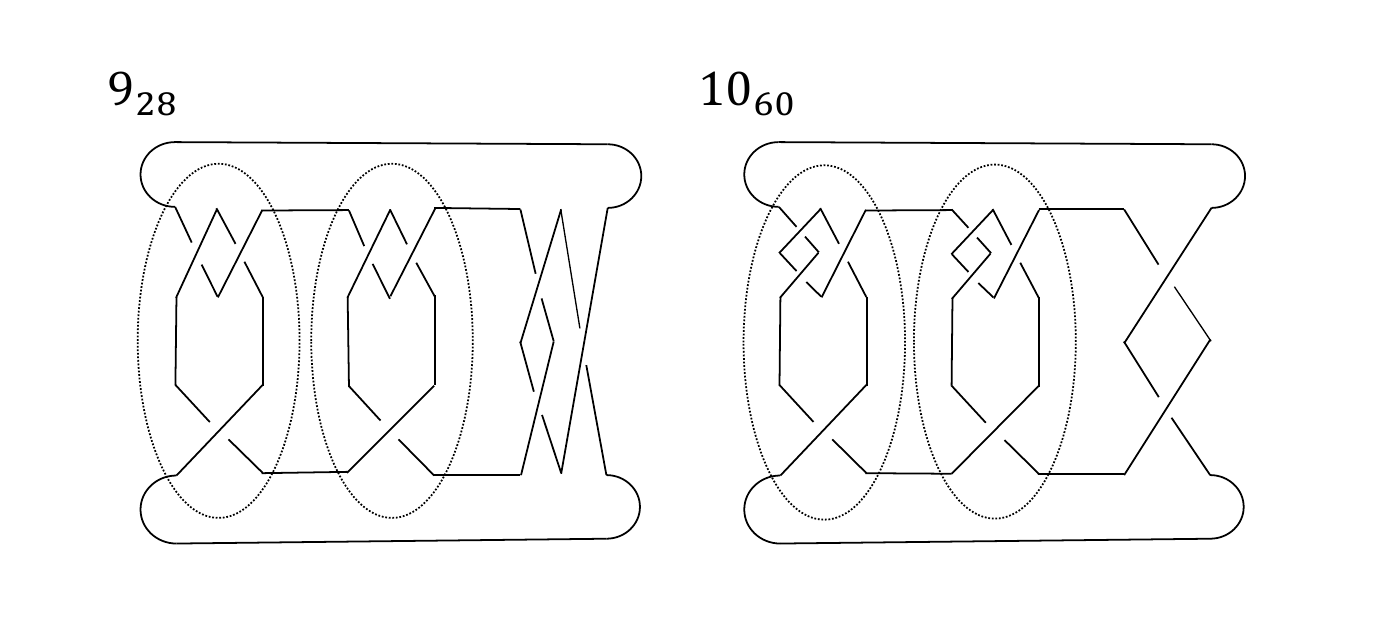}
    \caption{Diagrams of $9_{28}$ and $10_{60}$.}
    \label{fig:1060}
\end{figure}

\vspace{1em}
\section{Preliminaries}\label{sec2}

\subsection{Two-Bridge Knots with $\Delta$-Unknotting Number One} 
\label{subsec21}

\leavevmode

A knot $K$ is said to be a two-bridge knot if $K$ has a diagram as in Figure \ref{fig:twobridge}, called Conway's normal form. For a knot diagram as in Figure \ref{fig:twobridge}, each $|c_i|$ presents the number of half-twists for integers $c_1, c_2, ... , c_n$. In this paper, for the sign of $\alpha_i$, we assume that a right-handed half-twist is positive when $i$ is odd, and a left-handed half-twist is positive when $i$ is even. We denote this knot diagram by $C(c_1, c_2, ... , c_n)$.
\begin{figure}[htbp]
 \centering
 \includegraphics[width=1\linewidth]{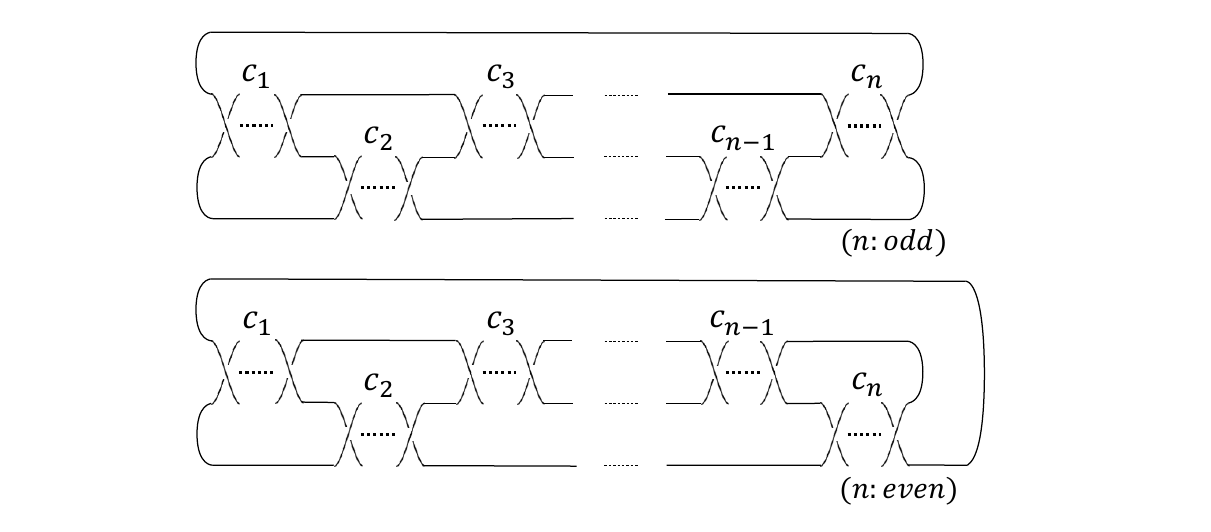}
 \caption{A two-bridge knot $C(c_1, c_2, ... , c_n)$.}
 \label{fig:twobridge}
\end{figure}

In \cite{Naka3}, we proposed the following conjecture and proved the following theorem for two-bridge knots. 

\begin{restatable}{conjecture}{conj1} (\cite{Naka3}) \label{thm:conj1}
Let $K$ be a nontrivial two-bridge knot. Then, the following two conditions are equivalent. 
\begin{enumerate}
    \item       
    $u^{\Delta}(K) = 1$.
    \item       
    $K$ or its mirror image $K^*$ can be expressed as\\ $C(b_0, b_m, ... , b_2, b_1, \overbrace{1, 1, 1, 1, 1}^{\text{5}}, 1-b_1, -b_2, ... , -b_m)$, \\
    where $b_i$ is an integer for $0 \leq i \leq m$. 
\end{enumerate}
\end{restatable}

\begin{restatable}{theorem}{deltaone}
(\cite{Naka3}) \label{thm:deltaone}
Let $K=C(b_0, b_m, ... , b_2, b_1, 1, 1, 1, 1, 1, 1-b_1, -b_2, ... , -b_m)$ be a nontrivial two-bridge knot, where $b_i$ is an integer for $0 \leq i \leq m$. 
Then, we have $u^{\Delta}(K)= 1$.
\end{restatable}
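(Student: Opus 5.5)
The plan is to exhibit a single $\Delta$-move in the Conway normal form $C(b_0, b_m, \dots, b_1, 1,1,1,1,1, 1-b_1, -b_2, \dots, -b_m)$ that produces a two-bridge diagram of determinant $1$. A two-bridge knot $S(p,q)$ with $p=1$ is trivial, so this gives $u^{\Delta}(K)\le 1$. Since $K$ is nontrivial, $u^{\Delta}(K)\ge 1$, and equality follows.

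\textbf{Step 1: local computation.} The $\Delta$-move is performed inside the central block $1,1,1,1,1$. There the three strands pass through three consecutive single-crossing twist regions, so we can choose three crossings, one from each of three consecutive regions, that bound a triangle face with the correct over/under pattern; possibly an isotopy first (a flype, or rewriting $1$ as a twist in the other direction) is needed to make such a face appear. After the move the central portion, cut out as a rational tangle, is again rational, and I would compute its new continued fraction $C(x_1,\dots,x_k)$. The target, possibly after absorbing the adjacent $1$ into $1-b_1$, is that the move replaces $b_1, 1,1,1,1,1, 1-b_1$ by a block of the shape $b_1, \varepsilon, -b_1$ (with $\varepsilon=\pm1$) or an equivalent one. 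I would guess the move on the crossings of the middle three $1$'s converts $1,1,1$ into a single twist of opposite sign, with the outer $1$'s merging with $b_1$ and $1-b_1$; I expect this to be the main obstacle. To find the right triangle and the resulting fraction I would first do it by hand on the smallest case $m=0$, i.e.\ $C(b_0,1,1,1,1,1)$, and check that the result has determinant $1$.

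\textbf{Step 2: determinant via matrices.} Encode $C(c_1,\dots,c_n)$ by the product of the matrices $\begin{pmatrix} c_i & 1\\ 1 & 0\end{pmatrix}$, with signs adjusted to the paper's alternating convention. Then $\det$ is, up to sign, an entry of the product. After the move the word has the shape $A_{b_0}\,W\,E\,W'$. Here $W$ corresponds to $b_m,\dots,b_1$ and $W'$ to $-b_1,\dots,-b_m$, and the identity $\begin{pmatrix} -c&1\\1&0\end{pmatrix}=-J\begin{pmatrix} c&1\\1&0\end{pmatrix}^{-1}J$ with $J=\mathrm{diag}(1,-1)$ shows that $W'$ is essentially $W^{-1}$ up to $J$-conjugation and reversal. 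Therefore $W E W'$ collapses telescopically, just as $C(\dots,b,0,-b,\dots)=C(\dots,0,\dots)$ collapses diagrammatically. Only $b_0$ and the small central matrix $E$ remain, and one checks that the relevant entry is $\pm1$ independently of $b_0$.

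\textbf{Step 3: diagrammatic version.} Alternatively, I would avoid matrices and show directly that the post-move diagram unravels. Using the relations $C(\dots,a,0,b,\dots)=C(\dots,a+b,\dots)$ together with the move identity from Step 1, repeatedly cancel $b_i$ against $-b_i$ from the inside out. This ends with a diagram $C(b_0,\varepsilon)$-type or $C(b_0,0,\pm1)$-type, which is visibly trivial because the $b_0$ twists become nugatory.
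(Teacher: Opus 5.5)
Your overall route (one $\Delta$-move inside the block $1,1,1,1,1$, then cancelling $b_i$ against $-b_i$) is the right one. It is what the paper relies on when it says a Type I tangle goes to the horizontal tangle by a single $\Delta$-move. The gap is that the only substantive step, Step~1, is left as a guess, and the guessed outcome is wrong in a way that breaks Steps~2 and~3.

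A centre of the form $b_1,\varepsilon,-b_1$ with $\varepsilon=\pm1$ does not telescope. With $M(c)=\begin{pmatrix}c&1\\1&0\end{pmatrix}$ the correct relation is $M(-c)=M(0)M(c)^{-1}M(0)$. Your identity is false: $-JM(c)^{-1}J=\begin{pmatrix}0&1\\1&c\end{pmatrix}$. So $W'=M(0)W^{-1}M(0)$, and $WEW'=WEM(0)W^{-1}M(0)$ reduces to $E$ only when $E=M(0)$, i.e.\ when the centre is a \emph{zero} twist.

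Your proposed end states are not trivial either. $C(b_0,\pm1)=C(b_0,0,\pm1)=C(b_0\pm1)$ is the $(2,b_0\pm1)$ torus link. The $(1,1)$-entry of $M(b_0)M(\varepsilon)$ is $1+\varepsilon b_0$, which does depend on $b_0$; concretely, $C(1,2,1,-2)$ has determinant $5$.

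The missing computation is short, and no flype is needed. The three crossings of three consecutive unit twist regions bound a triangular face. Since the diagram is alternating there, the over/under pattern around that face is cyclic, which is precisely a $\Delta$-configuration. In braid notation the move replaces $\sigma_2\sigma_1^{-1}\sigma_2$ by $\sigma_1\sigma_2^{-1}\sigma_1$; the two differ by the Borromean braid $(\sigma_2^{-1}\sigma_1)^3$. Absorbing the two outer letters into the neighbouring twist regions gives
\[
C(\dots,x,1,1,1,y,\dots)\ \longrightarrow\ C(\dots,x-1,-1,y-1,\dots).
\]
Apply this to the middle three of the five $1$'s, so $x=y=1$:
\begin{itemize}
\item The block $b_1,1,1,1,1,1,1-b_1$ becomes $b_1,0,-1,0,1-b_1$.
\item By $C(\dots,a,0,b,\dots)=C(\dots,a+b,\dots)$ this is the single entry $0$.
\item Then $b_2,0,-b_2$ becomes $0$, and so on, ending at $C(b_0,0)$.
\item Its matrix $M(b_0)M(0)=\begin{pmatrix}1&b_0\\0&1\end{pmatrix}$ gives fraction $1/0$, the unknot.
\item For $m=0$ one gets $C(b_0,0,-1,0)=C(b_0-1,0)$, also the unknot.
\end{itemize}

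So your guess that the middle $1,1,1$ becomes one twist of opposite sign is correct. However, the two flanking $1$'s become $0$'s rather than merging into $b_1$ and $1-b_1$, and it is exactly these zeros that make the centre disappear. With this computation supplied, your Steps~2 and~3 go through.
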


\subsection{Montesinos Knots and Rational Tangles} 
\label{subsec22}

\leavevmode

A Montesinos knot $K = M\big(b ; (\alpha_1, \beta_1), (\alpha_2, \beta_2), \ldots, (\alpha_r, \beta_r)\big)$ with $r$ branches is a knot as illustrated in Figure \ref{fig:montesinos}.
Here $r$, $b$, $\alpha_k$, and $\beta_k$ are integers 
such that $r \geq 0$,  $\alpha_k \geq 2$, and $\gcd(\alpha_k, \beta_k)=1$.
The quantity $|b|$ denotes the number of half-twists corresponding to the integer parameter $b$, where a right-handed half-twist is considered positive.
Let $T(\beta_k / \alpha_k)$ denote the rational tangle of slope $\beta_k / \alpha_k$ (see Figure  \ref{fig:tangle}).
\begin{figure}[htbp]
    \centering
    \includegraphics[width=0.8\linewidth]{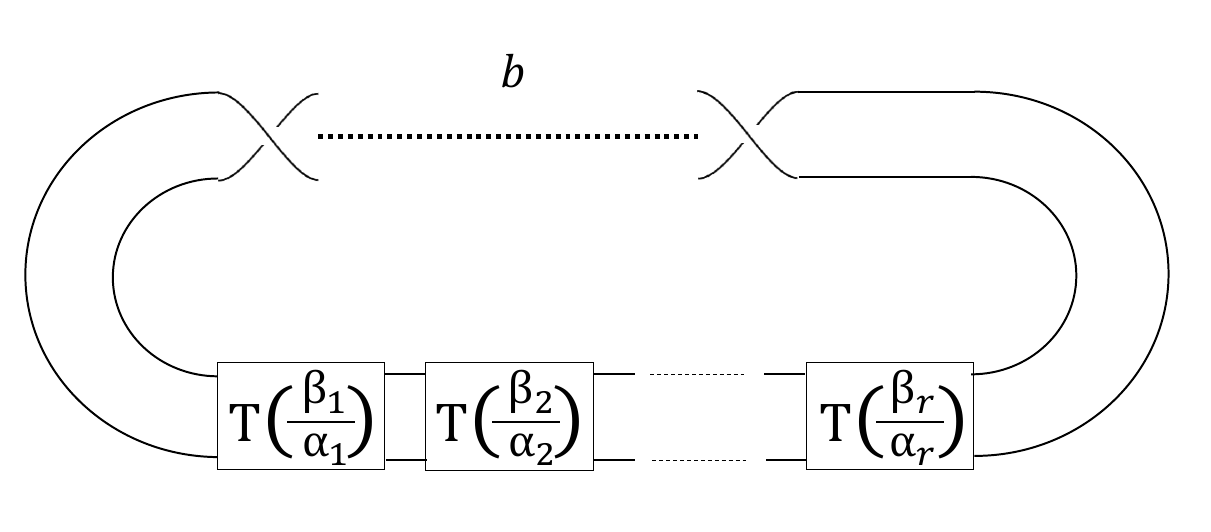}
    \caption{A Montesinos knot $M\big(b ; (\alpha_1, \beta_1), (\alpha_2, \beta_2), \ldots, (\alpha_r, \beta_r)\big)$.}
    \label{fig:montesinos}
\end{figure}
\begin{figure}[htbp]
    \centering
    \includegraphics[width=0.8\linewidth]{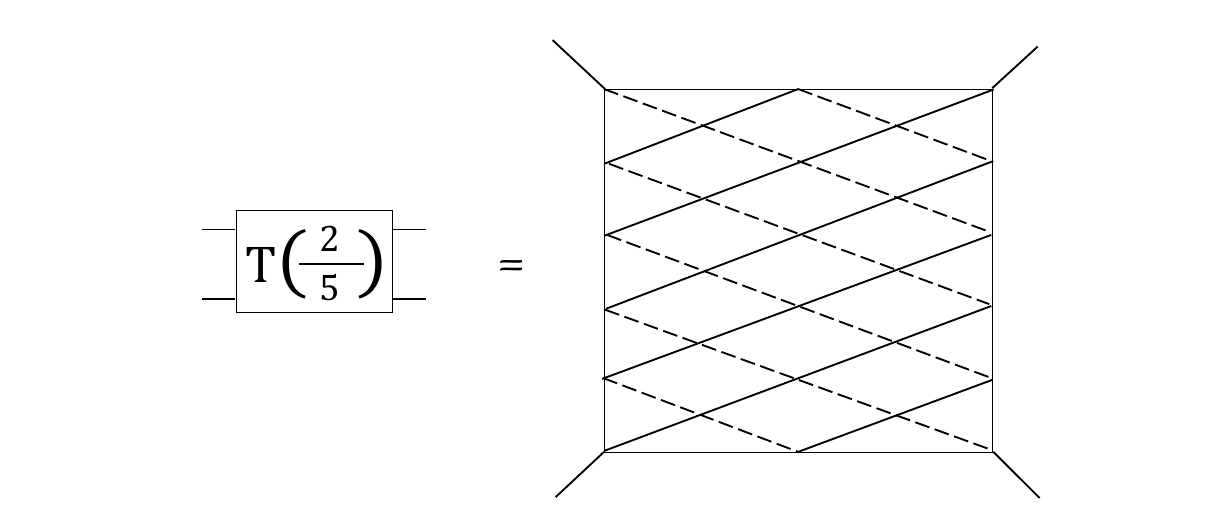}
    \caption{The tangle $T(2/5)$.}
    \label{fig:tangle}
\end{figure}
For the tangle $T(\beta_k / \alpha_k)$,
let $L \big( T(\beta_k / \alpha_k) \big)$ denote the link obtained by the standard left/right closure
(see Figure \ref{fig:closure}).
\begin{figure}[htbp]
    \centering
    \includegraphics[width=0.3\linewidth]{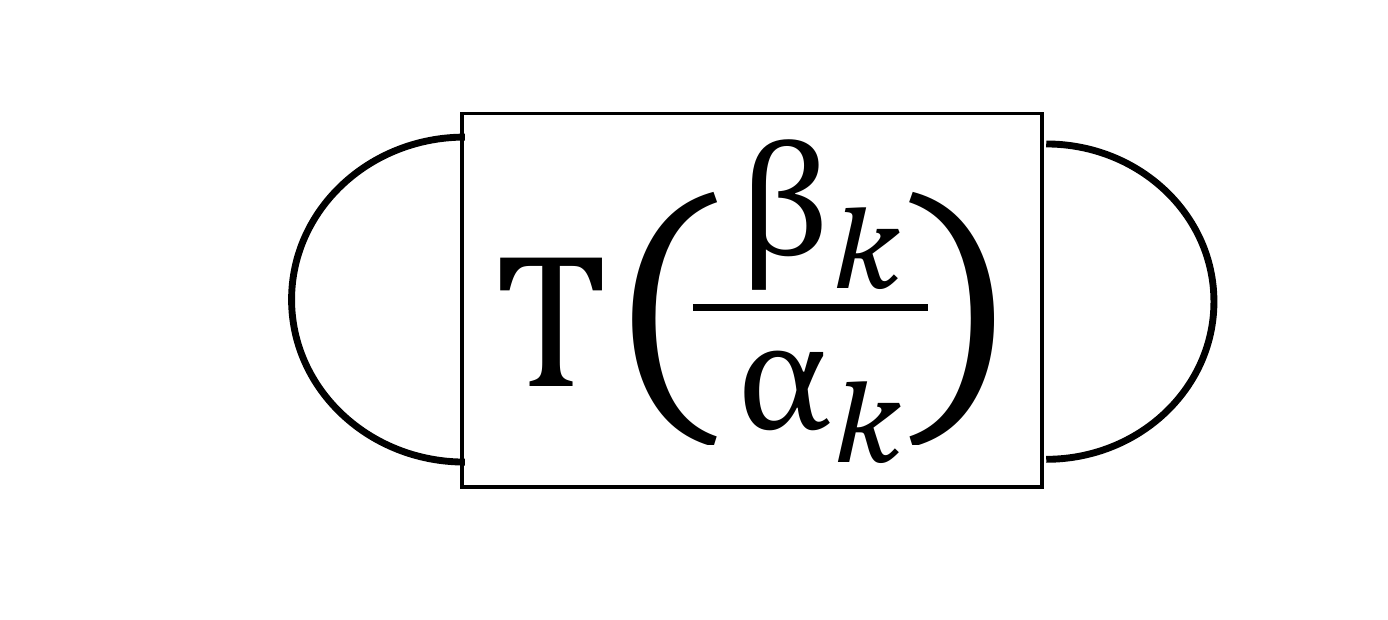}
    \caption{A link
    $L \big( T(\beta_k / \alpha_k) \big)$
    obtained by the standard left/right closure
    of the tangle $T(\beta_k / \alpha_k)$.}
    \label{fig:closure}
\end{figure}
A continued fraction expansion of a rational number $\beta / \alpha$, where we assume $-\alpha < \beta < \alpha$, is a finite sequence $s_1, s_2$, ... , $s_n$ such that
\begin{align*}
\frac{\beta}{\alpha}
& = \cfrac{1}{\,s_1 - \cfrac{1}{\,s_2 - \cfrac{1}{\,\ddots - \cfrac{1}{s_n}}}} \\
& = \cfrac{1}{\,s_1 + \cfrac{1}{\,- s_2 + \cfrac{1}{\,\ddots + \cfrac{1}{(-1)^{n-1} s_n}}}},
\end{align*}
and $s_i \neq 0$ for $1 \leq i \leq n.$
The tangle $T(\beta / \alpha)$ is then representable as in Figure \ref{fig:tangletwo} (see \cite{HaM}).
\begin{figure}[htbp]
    \centering
    \includegraphics[width=1\linewidth]{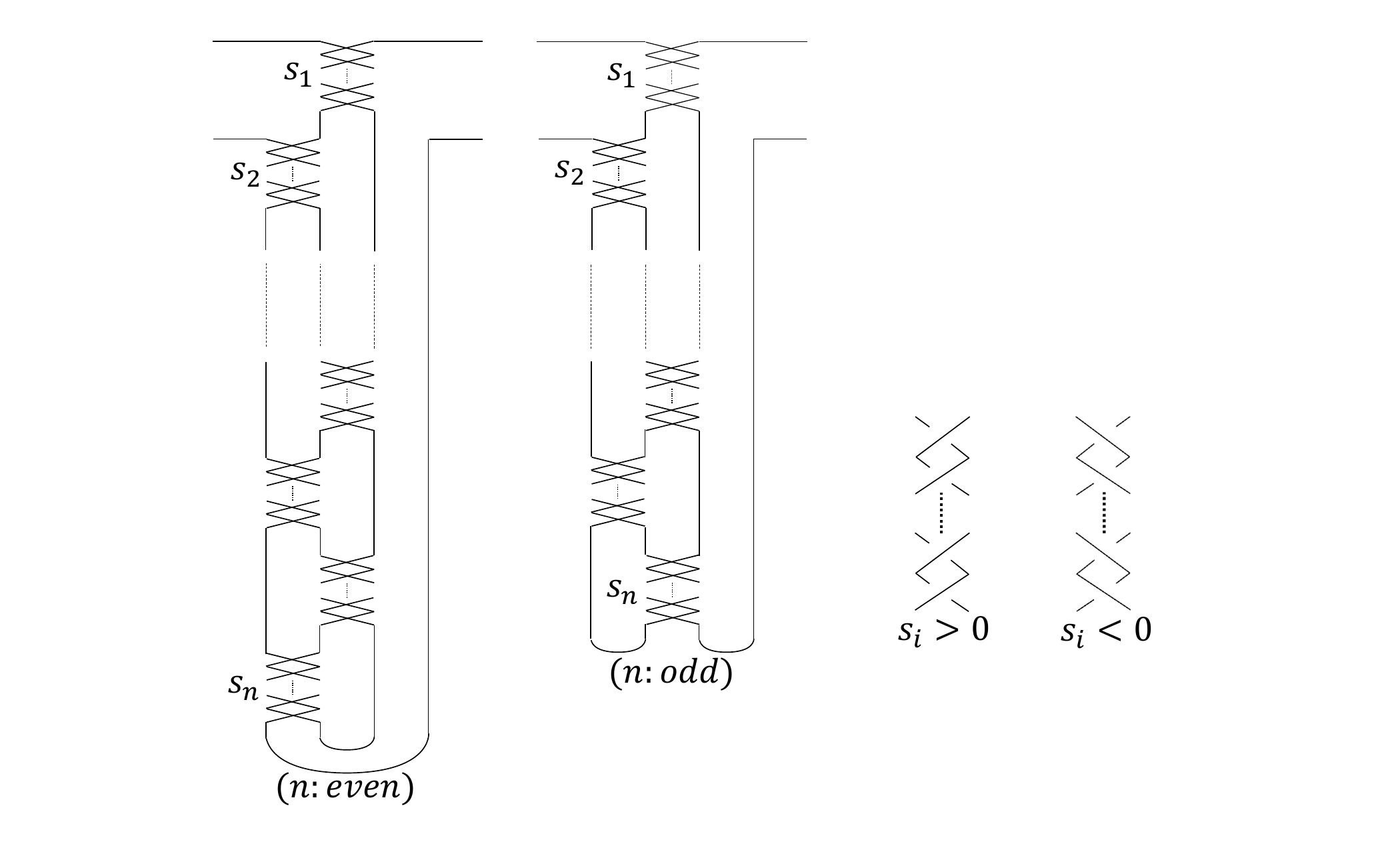}
    \caption{Another representation of a rational tangle for illustration.}
    \label{fig:tangletwo}
\end{figure}
In this paper, we consider the following Montesinos knot
$K = M(0; (\alpha_1, \beta_1), \dots, (\alpha_r, \beta_r))$,
where for each $1 \le k \le r$, the rational number $\beta_k/\alpha_k$
admits a finite continued fraction expansion
\[
\frac{\beta_k}{\alpha_k}
=
\cfrac{1}{c^{(k)}_1
 + \cfrac{1}{c^{(k)}_2
 + \cfrac{1}{\ddots
 + \cfrac{1}{c^{(k)}_{n(k)}}}}},
\]
where $n(k)$ is a positive integer, and $c^{(k)}_i \neq 0$ for $1 \leq i \leq n(k)$.

The tangle $T(\beta_k / \alpha_k)$ is represented as in Figure \ref{fig:tangletwoc}, 
a presentation adopted to facilitate comparison with two-bridge knots.
\begin{figure}[htbp]
    \centering
    \includegraphics[width=1\linewidth]{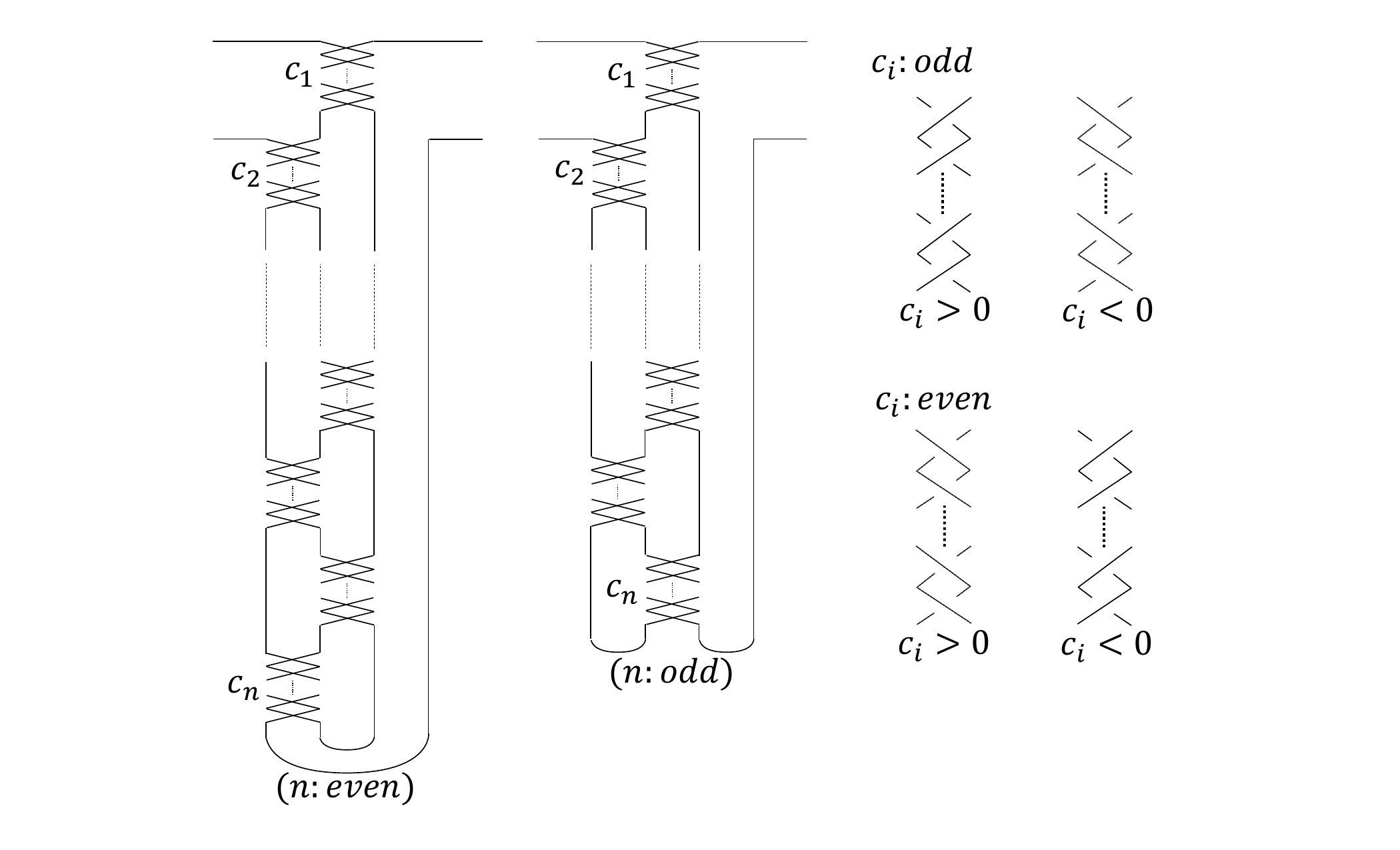}
    \caption{Another representation of a rational tangle used in this paper.}
    \label{fig:tangletwoc}
\end{figure}

\vspace{1em}
The upper left and lower left corners of $T(\beta / \alpha)$ are connected by a strand if and only if $\alpha$ is even.
Hence we see that if $K = M\big(0 ; (\alpha_1, \beta_1), (\alpha_2, \beta_2), \ldots, (\alpha_r, \beta_r)\big)$
is to be a knot rather than a link, at most one of $\alpha_1, \alpha_2$, ... , $\alpha_r$ can be even.
Since a cyclic permutation indices does not change the knot type, we hereafter assume that $\alpha_2$, ..., $\alpha_r$ are odd.
With this convention, we say that $K$ is of odd type if $\alpha_1$ is odd, and of even type if $\alpha_1$ is even.

Since $a_2(K)=a_2(K^*)$ and $u^{\Delta}(K)=u^{\Delta}(K^*)$, where $K^*$ denotes the mirror image of a knot $K$, we do not distinguish a knot from its mirror image in this paper.

\section{Montesinos Knots with $\Delta$-Unknotting Number One}\label{sec3}

In this section, we discuss Montesinos knots with $\Delta$-unknotting number one.

We consider a tangle of Type I.

A tangle $T(\beta / \alpha )$ is of Type I if $(c_1, c_2, ..., c_n) = (b_0, b_m, ... , b_2, b_1, 1, 1, 1, 1, 1, 1-b_1, -b_2, ..., -b_m)$, where $n=2m+6$
(see Figure \ref{fig:tangletwoc}), and  $b_i$ is an integer for $0 \leq i \leq m$.

$T(\beta/\alpha)$ can be deformed into the horizontal tangle
(see Figure \ref{fig:horizontal})
by a single $\Delta$-move.

By Theorem \ref{thm:deltaone}, 
$u^{\Delta}(L \big( T(\beta / \alpha) \big)) = 1$.
\begin{figure}[htbp]
    \centering
    \includegraphics[width=0.3\linewidth]{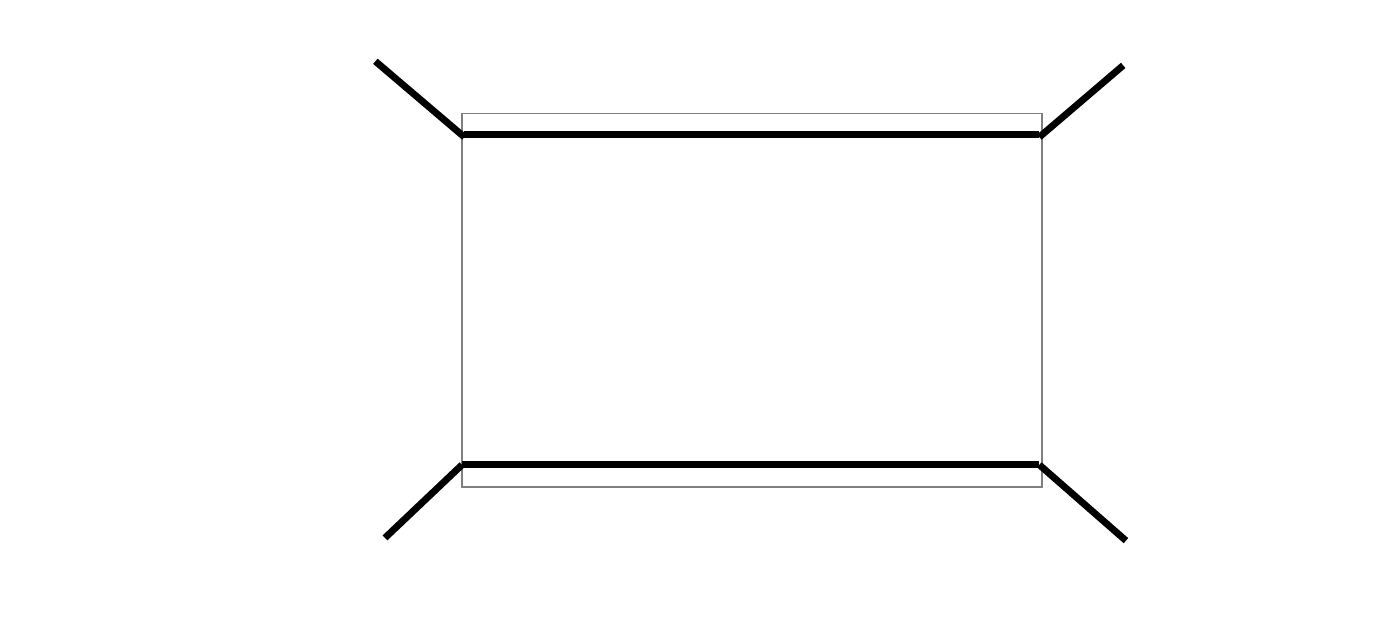}
    \caption{The horizontal tangle.}
    \label{fig:horizontal}
\end{figure}

\vspace{1em}
The following conjecture and theorem for Montesinos knots are motivated by Conjecture \ref{thm:conj1} and Theorem \ref{thm:deltaone}.

\conjmone*

\mone*

\begin{proof}
$T(\beta_3 / \alpha_3)$ can be deformed into the horizontal trivial tangle by a single $\Delta$-move.

Hence, applying a $\Delta$-move to the corresponding tangle component $T(\beta_3 / \alpha_3)$ in
$M(0;(\alpha_1, \beta_1), (\alpha_2, \beta_2), (\alpha_3, \beta_3))$,
we obtain the Montesinos knots
$M(0;(\alpha_1, \beta_1), (\alpha_2, \beta_2))$.
It is known that the two-bridge knot denoted by $M(0;(\alpha_1, \beta_1), (\alpha_2, \beta_2))$ is the trivial knot if and only if $|\alpha_1 \beta_2 + \beta_1 \alpha_2| = 1$ (see \cite{Tor}).
Since this condition is satisfied, the resulting knot is the trivial knot.

The proof is complete.
\end{proof}


Theorem \ref{thm:moneone} follows from Theorem \ref{thm:mone}.

\moneone*

\begin{proof}
Both tangles $T(\beta_2 / \alpha_2 )$ and $T(\beta_3 / \alpha_3)$ can be deformed into the horizontal trivial tangle by a single $\Delta$-move.

Hence, applying a $\Delta$-move to the corresponding tangle component $T(\beta_2 / \alpha_2)$ (resp. $T(\beta_3 / \alpha_3)$) in
$M(0;(\alpha_1, \beta_1), (\alpha_2, \beta_2), (\alpha_3, \beta_3))$,
we obtain the Montesinos knots
$M(0;(\alpha_1, \beta_1), (\alpha_3, \beta_3))$
and
$M(0;(\alpha_1, \beta_1), (\alpha_2, \beta_2))$, respectively.
These knots are trivial knots.

The proof is complete.
\end{proof}

The following example satisfies the conditions of Theorem \ref{thm:moneone}.

\begin{example}
Let $K = M(0;(\pm (3 b_0 +5), \mp 3), (\pm (8 b_0 + 13), \pm 8), (\pm (8 b_0 + 13), \pm 8))$, where $b_0$ is an integer
(see Tables \ref{table3} and \ref{table4}).
Then $u^{\Delta}(K) = 1$.

In particular, performing a $\Delta$-move in either of the two tangles $T(\pm (8 b_0 + 13), \pm 8) $ yields the trivial knot.

Furthermore, $u(K)=1$,
since $(8 b_0 + 13, 8)$ belongs to the family $(2mn \pm 1,2n^2)$
by taking $n=2$ and $m=2b_0 + 3$
(see \cite{Tor}).

For example, if $b_0 = -1$, 
then $K = M(0;(2,-3),(5,8),(5,8))\cong 10_{60}$ $(\cong M(0;(2,1),(5,3),(5,3)) )$
(see Figure \ref{fig:1060}).
\end{example}

\begin{table}[htbp]
\centering
\caption{$M(0;(3 b_0 +5, -3), (8 b_0 + 13, 8), (8 b_0 + 13, 8))$}
\label{table3}
\begin{tabular}{c|c||c}
\hline
$L \big( T(\beta / \alpha) \big)$ & $\beta / \alpha$ & Montesinos knot \\
\hline
$(4_1 \cong C(2,2) \cong) \quad C(-1,0,1,1,1,1,1,1)$ &
$8/5$ & $M(0;(2,-3),(5,8),(5,8))$ \\
\hline
$(6_3 \cong C(2,1,1,2) \cong) \quad C(0,0,1,1,1,1,1,1)$ &
$8/13$ & $M(0;(5,-3),(13,8),(13,8))$ \\
\hline
$(7_7 \cong C(2,1,1,1,2) \cong) \quad C(1,0,1,1,1,1,1,1)$ &
8/21 &
$M(0;(8,-3),(21,8),(21,8))$ \\
\hline
$(8_{13} \cong C(3,1,1,1,2) \cong) \quad C(2,0,1,1,1,1,1,1)$ &
8/29 &
$M(0;(11,-3),(29,8),(29,8))$ \\
\hline
$(9_{14} \cong C(4,1,1,1,2) \cong) \quad C(3,0,1,1,1,1,1,1)$ &
8/37 &
$M(0;(14,-3),(37,8),(37,8))$ \\
\hline
$(10_{10} \cong C(5,1,1,1,2) \cong) \quad C(4,0,1,1,1,1,1,1)$ &
8/45 &
$M(0;(17,-3),(45,8),(45,8))$ \\
\hline
\end{tabular}
\end{table}

\begin{table}[htbp]
\centering
\caption{$M(0;(-3 b_0 -5, 3), (-8 b_0 - 13, -8), (-8 b_0 - 13, -8))$}
\label{table4}
\begin{tabular}{c|c||c}
\hline
$L \big( T(\beta / \alpha) \big)$ & $\beta / \alpha$ & Montesinos knot \\
\hline
$(3_1 \cong C(3) \cong) \quad C(-2,0,1,1,1,1,1,1)$ &
-8/3 &
$M(0;(1,3),(3,-8),(3,-8))$ \\
\hline
$(6_2 \cong C(3,1,2) \cong) \quad C(-3,0,1,1,1,1,1,1)$ &
-8/11 &
$M(0;(4,3),(11,-8),(11,-8))$ \\
\hline
$(7_6 \cong C(2,2,1,2) \cong) \quad C(-4,0,1,1,1,1,1,1)$ &
-8/19 &
$M(0;(7,3),(19,-8),(19,-8))$ \\
\hline
$(8_{11} \cong C(3,2,1,2) \cong) \quad C(-5,0,1,1,1,1,1,1)$ &
-8/27 &
$M(0;(10,3),(27,-8),(27,-8))$ \\
\hline
$(9_{12} \cong C(4,2,1,2) \cong) \quad C(-6,0,1,1,1,1,1,1)$ &
-8/35 &
$M(0;(13,3),(35,-8),(35,-8))$ \\
\hline
$(10_{7} \cong C(5,2,1,2) \cong) \quad C(-7,0,1,1,1,1,1,1)$ &
-8/43 &
$M(0;(16,3),(43,-8),(43,-8))$ \\
\hline
\end{tabular}
\end{table}

\begin{remark}\label{remark:delta}
The following observations hold. Statements {\rm (1)} and {\rm (2)} are equivalent, and so are statements {\rm (3)} and {\rm (4)} (see Figures \ref{fig:tangle53} and \ref{fig:tangle32}.)

\begin{enumerate}
\item $T(3/5)$ can be deformed into $T(-1/1)$ by a single $\Delta$-move.

\item $T(8/5)$ can be deformed into the horizontal tangle by a single $\Delta$-move.

\item $T(2/3)$ can be deformed into $T(-2/1)$ by a single $\Delta$-move.

\item $T(8/3)$ can be deformed into the horizontal tangle by a single $\Delta$-move.
\end{enumerate}

\end{remark}
\begin{figure}
    \centering
    \includegraphics[width=0.6\linewidth]{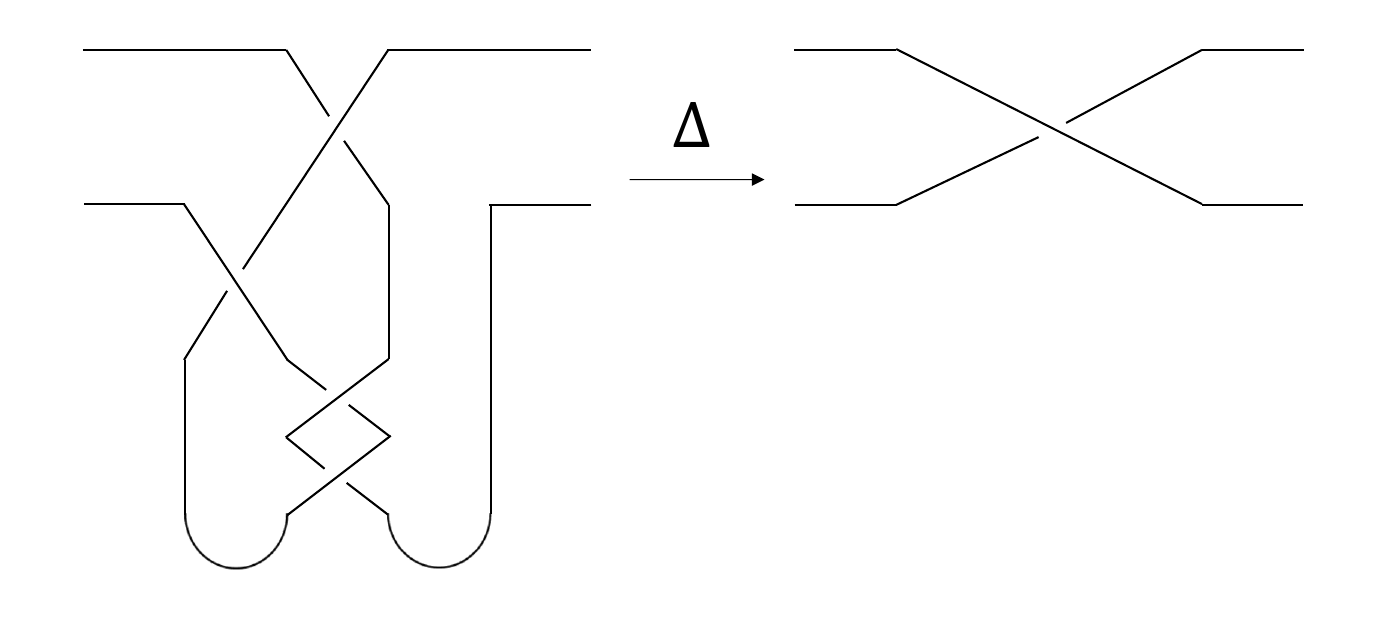}
    \caption{Deformation of $T(3/5)$.}
    \label{fig:tangle53}
\end{figure}
\begin{figure}
    \centering
    \includegraphics[width=0.6\linewidth]{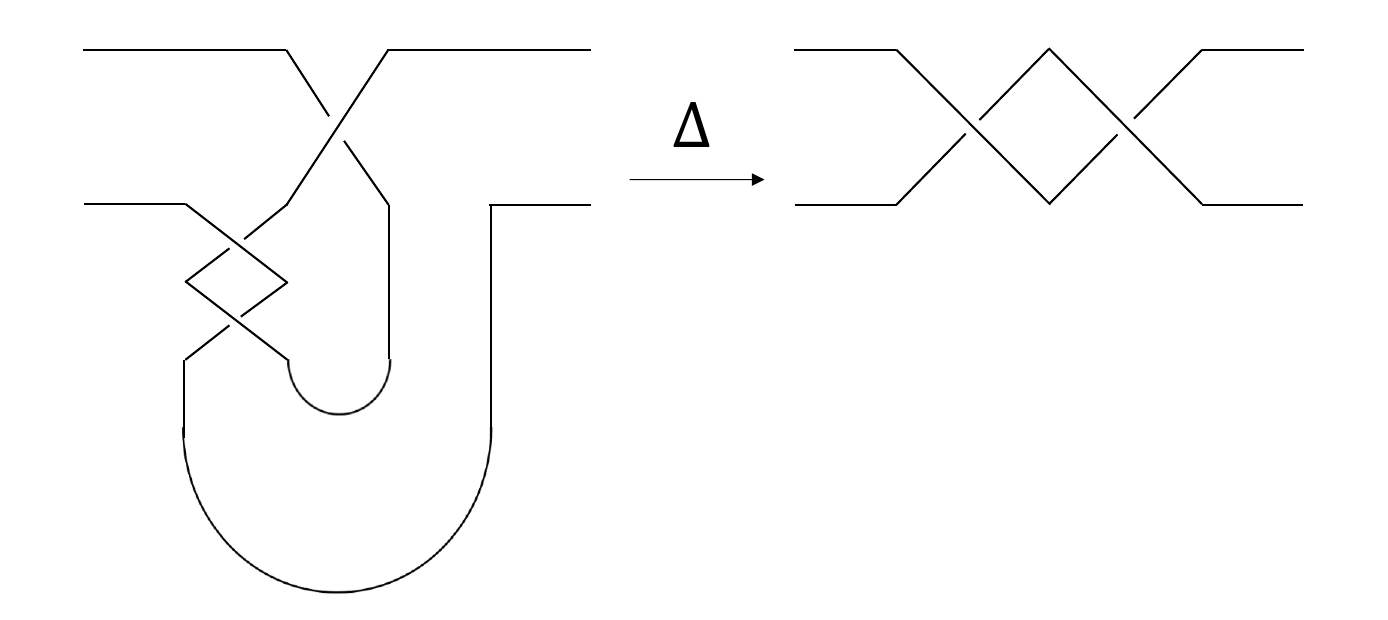}
    \caption{Deformation of $T(2/3)$.}
    \label{fig:tangle32}
\end{figure}

\begin{example}
By Remark \ref{remark:delta}, the deformations of Montesinos knots with $u^{\Delta}(K)=1$ are given in the following Table \ref{tabledeform}.

In particular, $9_{28}$ and $10_{60}$ (see Figure \ref{fig:1060}) satisfy the conditions of Theorem \ref{thm:moneone}.

\end{example}

\begin{table}[htbp]
\centering
\caption{Deformations of Montesinos knots with $u^{\Delta}(K)=1$}
\label{tabledeform}
\begin{tabular}{c|c|c}
\text{Montesinos knot} & $\Delta$ & \text{trivial knot} \\
\hline
$9_{22} \cong M(0;(2,1),(3,1),\underline{(5,3)})$
& $\to$ &
$M(1;(2,1),(3,1)) \cong M(0;(2,1),(3,-2))$ \\
$9_{24} \cong M(0;(3,1),\underline{(3,2)},(2,3))$
& $\to$ &
$M(2;(3,1),(2,3)) \cong M(0;(3,1),(2,-1))$ \\
$9_{28} \cong M(0;\underline{(3,2)},\underline{(3,2)},(2,3))$
& $\to$ &
$M(2;(3,2),(2,3)) \cong M(0;(3,2),(2,-1))$ \\
$9_{30} \cong M(0;(2,1),(3,2),\underline{(5,3)})$
& $\to$ &
$M(1;(2,1),(3,2)) \cong M(0;(2,1),(3,-1))$ \\
$10_{59} \cong M(0;(2,1),(5,2),\underline{(5,3)})$
& $\to$ &
$M(1;(2,1),(5,2)) \cong M(0;(2,1),(5,-3))$ \\
$10_{60} \cong M(0;(2,1),\underline{(5,3)},\underline{(5,3)})$
& $\to$ &
$M(1;(2,1),(5,3)) \cong M(0;(2,1),(5,-2))$ \\
$10_{71} \cong M(0;(5,2),\underline{(3,2)},(2,3))$
& $\to$ &
$M(2;(5,2),(2,3)) \cong M(0;(5,-8),(2,3))$ \\
$10_{73} \cong M(0;(5,3),\underline{(3,2)},(2,3))$
& $\to$ &
$M(2;(5,3),(2,3)) \cong M(0;(5,-7),(2,3))$ \\
\end{tabular}
\end{table}

\section*{Acknowledgments}
The author would like to express his sincere gratitude to Professor Makoto Sakuma for his valuable advice and continuous support.

The author is also grateful to Professor Yoshiaki Uchida for posing Problem~\ref{problemdelta}, which motivated this work.

Finally, the author would like to thank his family for their unwavering support and understanding throughout this work.





\end{document}